\DeclareMathOperator{\closure}{closure}
\DeclareMathOperator{\Area}{Area}
\begin{document}

\newtheorem{thm}{Theorem}
\newtheorem*{thm*}{Theorem}
\newtheorem{cor}[thm]{Corollary}
\newtheorem{lemma}[thm]{Lemma}
\newtheorem{lemmadef}[thm]{Lemma/Definition}
\newtheorem{proposition}[thm]{Proposition}
\newtheorem{prop}[thm]{Proposition}

\newenvironment{definition}[1][Definition]{\begin{trivlist}
\item[\hskip \labelsep {\bfseries #1}]}{\end{trivlist}}
\newenvironment{example}[1][Example]{\begin{trivlist}
\item[\hskip \labelsep {\bfseries #1}]}{\end{trivlist}}
\newenvironment{remark}[1][Remark]{\begin{trivlist}
\item[\hskip \labelsep {\bfseries #1}]}{\end{trivlist}}
\newenvironment{remarks}[1][Remarks]{\begin{trivlist}
\item[\hskip \labelsep {\bfseries #1}]}{\end{trivlist}}

\theoremstyle{definition}
\newtheorem{defn}[thm]{Definition}
\newtheorem{defns}[thm]{Definitions}
\newtheorem{rem}[thm]{Remark}
\newtheorem{Rem}[thm]{Remark}
\newtheorem{Rems}[thm]{Remarks}
\newtheorem{Exercise}[thm]{Exercise}
\newtheorem{Example}[thm]{Example}
\newtheorem{Examples}[thm]{Examples}
\newtheorem{Open questions}[thm]{Open questions}
\newtheorem{Open question}[thm]{Open question}
\newtheorem{Open problems}[thm]{Open problems}
\newtheorem{Open problem}[thm]{Open problem}
\newtheorem*{Acknowledgements}{Acknowledgements}  

\newcommand{\leftexp}[2]{{\vphantom{#2}}^{#1}\!{#2}}
\newcommand{\beq}{\begin{eqnarray*}}
\newcommand{\eeq}{\end{eqnarray*}}
\newcommand{\ol}{\overline}
\newcommand{\Supp}{\textrm{Supp}}
\newcommand{\Span}{\textrm{Span}}
\newcommand{\Gr}{\textbf{Gr} }
\newcommand{\GL}{\textrm{GL}}
\newcommand{\SL}{\textrm{SL}}
\newcommand{\Matr}{\textbf{Matr}}
\newcommand{\mat}[9]{\left(\begin{array}{ccc}#1&#2&#3\\#4&#5&#6\\#7&#8&#9\end{array}\right)}
\newcommand{\rowm}[3]{\left(\begin{array}{c}#1\\#2\\#3\end{array}\right)}
\newcommand{\rk}{\textrm{rk}~}
\newcommand{\im}{\textrm{im}~}
\newcommand{\Div}{\textrm{div}~}
\newcommand{\z}{\overline}
\newcommand{\Par}[2]{\frac{\partial#1}{\partial#2}}
\newcommand{\hf}{\frac{1}{2}}
\newcommand{\lf}{\left(}
\newcommand{\rt}{\right)}
\newcommand{\re}{\textrm{Re}}
\newcommand{\img}{\textrm{Im}}
\newcommand{\limn}{{\lim\atop{n\to\infty}}}
\newcommand{\Res}{\textrm{Res}}
\newcommand{\eps}{\epsilon}
\newcommand{\tp}{\frac{1}{2\pi i}}
\newcommand{\of}{\circ}
\newcommand{\ub}{\underbrace}
\newcommand{\K}{\tilde{k}}
\newcommand{\floor}[1]{{\lfloor#1\rfloor}}
\newcommand{\Gal}{\textrm{Gal}}
\newcommand{\Aut}{\textrm{Aut}}
\newcommand{\out}[1]{{\textrm{Out}(F_#1)}}
\newcommand{\pr}[2]{\langle#1,#2\rangle}
\newcommand{\Tr}{\textrm{Tr}}
\newcommand{\id}{\mathfrak}
\newcommand{\rad}{\mathfrak{R}}
\newcommand{\Spec}{\mathrm{Spec}}
\newcommand{\tensor}{\otimes}
\newcommand{\Ann}{\textrm{Ann}}
\renewcommand{\phi}{\varphi}
\newcommand{\Ker}{\textrm{Ker}~}
\newcommand{\Img}{\textrm{Im}~}
\newcommand{\Z}{\mathbb{Z}}
\newcommand{\N}{\mathbb{N}_{\geq0}}
\newcommand{\trans}[1]{{\leftexp{T}{#1}}}
\def\onto{{\kern3pt\to\kern-8pt\to\kern3pt}}
\newcommand{\IO}{\textrm{IO}}
\newcommand{\IA}{\textrm{IA}}
\newcommand{\Stab}{\textrm{Stab}}
\newcommand{\ssim}[1]{\stackrel{(#1)}{\sim}}
\newcommand{\Lk}[1]{\textrm{Lk}_<(#1)}
\newcommand{\set}[1]{\left\{#1\right\}}
\newcommand{\ssm}{\smallsetminus}
\newcommand{\abs}[1]{\left|#1\right|}
\newcommand{\Dist}{\textup{Dist}}

\newcommand{\wh}{\widehat}
\newcommand{\wt}{\widetilde}
\newcommand{\wb}{\overline}
\newcommand{\ihat}{\hat{\imath}}
\newcommand{\fhat}{\hat{f}}
\newcommand{\ibar}{\overline{\imath}}
\newcommand{\fbar}{\overline{f}}

\def\ms{\medskip}
\renewcommand{\ss}{\smallskip}
\newcommand{\bs}{\bigskip}

\newcommand{\tc}[2]{\textcolor{#1}{#2}}
\definecolor{cerulean}{rgb}{0,.48,.65} \newcommand{\cerulean}[1]{\tc{cerulean}{#1}}
\definecolor{magenta}{rgb}{.5,0,.5} \newcommand{\magenta}[1]{\tc{magenta}{#1}}
\definecolor{dred}{rgb}{.5,0,0} \newcommand{\dred}[1]{\tc{dred}{#1}}
\definecolor{green}{rgb}{0,.5,0} \newcommand{\green}[1]{\tc{green}{#1}}
\definecolor{blue}{rgb}{0,0,0.5} \newcommand{\blue}[1]{\tc{blue}{#1}}
\definecolor{black}{rgb}{0,0,0} \newcommand{\black}[1]{\tc{black}{#1}}
\definecolor{dgreen}{rgb}{0,.3,0} \newcommand{\dgreen}[1]{\tc{dgreen}{#1}}
\definecolor{vdred}{rgb}{.3,0,0} \newcommand{\vdred}[1]{\tc{vdred}{#1}}
\definecolor{red}{rgb}{1,0,0} \newcommand{\red}[1]{\tc{red}{#1}}
\definecolor{salmon}{rgb}{0.98,0.50,0.45} \newcommand{\salmon}[1]{\tc{salmon}{#1}}
\definecolor{gray}{rgb}{.5,.5,.5} \newcommand{\gray}[1]{\tc{gray}{#1}}
\definecolor{seagreen}{rgb}{0.13,0.70,0.67} \newcommand{\seagreen}[1]{\tc{seagreen}{#1}}
\definecolor{chartreuse}{rgb}{0.40,0.80,0.00}\newcommand{\chartreuse}[1]{\textcolor{chartreuse}{#1}}
\definecolor{cornflower}{rgb}{0.39,0.58,0.93} \newcommand{\cornflower}[1]{\textcolor{cornflower}{#1}}
\definecolor{gold}{rgb}{0.80,0.68,0.00}\newcommand{\gold}[1]{\textcolor{gold}{#1}}

\newcommand{\commentO}[1]{\marginpar{\tiny\begin{center}\cerulean{#1}\end{center}}} 
\newcommand{\commentT}[1]{\marginpar{\tiny\begin{center}\gold{#1}\end{center}}}

\setlength{\parindent}{0pt}
\setlength{\parskip}{7pt}

\title{The Generalized Dehn Property does not imply \\ a linear isoperimetric inequality}
\author{Owen Baker and Timothy Riley\thanks{The second author is grateful for support from Simons Collaboration Grant 318301.}}

\date \today

\maketitle

\begin{abstract}
\noindent The Dehn property for a complex is that every non-trivial disk diagram has spurs or shells.  It implies a linear isoperimetric inequality.  It has been conjectured that the same is true of a more general property which also allows cutcells.  We give counterexamples. 

\medskip

\noindent La propri\'et\'e Dehn pour un complexe est que chaque diagramme de disque non trivial a des \'eperons ou des shells. Cela implique une in\'egalit\'e isop\'erim\'etrique lin\'eaire. Il a \'et\'e suppos\'e qu'il en \'etait de m\^eme pour une propri\'et\'e plus g\'en\'erale qui autorise \'egalement les cellules de coupe. Nous pr\'esentons des contre-exemples.

\ms \noindent   \textbf{2010 Mathematics Subject Classification:  20F67, 57M20}  
 \\  \emph{Key words and phrases:} Dehn property, isoperimetric inequality 
\end{abstract}

A \emph{disk diagram} $D\subset S^2$ is a finite contractible 2-complex.
The boundary path $\partial_p D$ is the attaching map of the 2-cell $R_\infty$ giving the decomposition $S^2=D\cup R_\infty$.
A \emph{disk diagram in a 2-complex} $X$ is a combinatorial map $D\to X$.
It is \emph{minimal area} if it contains the minimum number of 2-cells among all $D\to X$ with the same restriction to $\partial_p D$.  Minimal diagrams are \emph{reduced}: there do not exist adjacent `back-to-back' 2-cells in $D$ overlapping in at least an edge $e$, mapping to the same cell in $X$ with the same induced edge sequence on the boundaries starting from $e$. A \emph{spur} is a valence $1$ vertex in $\partial D$.  
A \emph{shell} is a 2-cell $R$ for which more than half the perimeter forms a contiguous segment of the boundary attaching map $\partial_pD$.   An $X$ in which every minimal nontrivial disk diagram contains a shell or spur is said to satisfy the \emph{Dehn property}.

Removing a shell $R$ (more precisely, removing the interior of $R$ from $D$ and the longer portion of the boundary) decreases the area of $D$ by one and the perimeter by at least one.
Removing a 1-cell $S$ leading to a spur   (more precisely,  taking the closure of $D - S$)  decreases perimeter by two. So the Dehn property gives rises to a linear isoperimetric inequality:  that is, there exists $K>0$ (in this case $K=1$) such that every null-homotopic edge-loop in $X$ (equivalently, every edge-loop in the universal cover $\widetilde{X}$) of length at most $n$ admits a disk diagram with at most $Kn$ 2-cells.        

(We do not consider our 2-complexes $X$,  disk diagrams, or the boundary circuits of the 2-cells that comprise them to have base points.)

 Gaster and Wise \cite{gasterwise} declare that
a 2-complex $X$ satisfies the \emph{generalized Dehn property} if each minimal disk diagram $D\to X$ is either a single 0-, 1-, or 2-cell, 
or contains a spur, shell, or \emph{cutcell}---a type of 2-cell which we shall describe momentarily.
They conjecture \cite[Conjecture~2.2]{gasterwise} that if $X$ is a compact 2-complex with this property, then its 
universal cover $\widetilde{X}$ enjoys a linear isoperimetric inequality. The likely intuition behind the conjecture is that a cutcell will be the site of a neck or of branching in a diagram.

Gaster and Wise propose   three  definitions of increasing strength for when a 2-cell  $R$ in a disk diagram $D$ is a \emph{cutcell}:  
\begin{enumerate}
\renewcommand{\labelenumi}{(\arabic{enumi}) }
\item  $D-\closure(R)$ has more than one component.  \label{one}
\item The preimage of $\partial R$ in $\partial_pD$ consists of more than one component. \label{two}
\item (Strong) The preimage of $\partial R$ in $\partial_pD$ consists of more than one component, each a nontrivial path. \label{three}
\end{enumerate}
Wise \cite{wisep} suggests adding the  assumption that each 2-cell $R$ in $X$ embeds in the universal cover  $\widetilde{X}$---that is, $(R,\partial R) \to (X,X^{(1)})$ lifts to an embedding  $(R,\partial R) \to (\widetilde{X},\widetilde{X}^{(1)})$.  This renders definitions \eqref{one} and  \eqref{two}   equivalent.

We  present a counterexample to the strong form:

\begin{thm} \label{main_counterexample}
The presentation 2-complex $X$ associated to the presentation
$$\langle \, a_1, a_2, b_1, b_2, c_1, c_2, c_3 \ | \  a_2b_1b_2a_2^{-1}a_1^{-1}b_2^{-1}c_1c_2c_3,\ \,a_1^{-1}b_1c_1c_2c_3 \, \rangle$$ of  $G = \Z^2 \ast F_4$  
enjoys the strongest form of the generalized Dehn property (definition \eqref{three} of cutcell).  Moreover, both 2-cells in $X$ embed in its universal cover  $\widetilde{X}$.   
\end{thm}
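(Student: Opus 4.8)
The plan is to dispatch the algebraic claims quickly and then concentrate on disk diagrams.

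\emph{The group and the embeddings.} The relator $r_2 = a_1^{-1}b_1c_1c_2c_3$ Tietze-eliminates $a_1 = b_1c_1c_2c_3$; substituting into $r_1$ and cancelling shows the surviving relation is a conjugate of the commutator $[\,a_1a_2,\ b_1b_2\,]$, so passing to the basis $s = a_1a_2,\ t = b_1b_2,\ b_1,\ c_1,\ c_2,\ c_3$ exhibits $G \cong \langle s,t \mid [s,t]\rangle \ast F(b_1)\ast F(c_1)\ast F(c_2)\ast F(c_3) \cong \mathbb{Z}^2\ast F_4$. A $2$-cell of $X$ embeds in $\widetilde X$ precisely when its attaching map is injective, equivalently when no nonempty proper cyclic subword of the corresponding relator represents $1$ in $G$; since $r_1$ and $r_2$ are relators it suffices to test the finitely many short subwords, and the free-product normal form makes this routine.

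\emph{Reducing the Dehn property to a combinatorial core.} Let $D\to X$ be a minimal-area disk diagram that is not a single cell, and suppose it has neither a spur nor a shell nor a strong cutcell; we seek a contradiction. As $D$ is minimal it is reduced, and we first pin down which $2$-cells can be adjacent. Both $r_1$ and $r_2$ are cyclically reduced with \emph{all their letters distinct}; this forces: (i) no two $R_2$-cells are adjacent, since a shared edge would occupy the cancelling position of $r_2$; (ii) an $R_1$-cell reading $r_1$ is never adjacent to one reading $r_1^{-1}$, since such a pair bounds a subdiagram whose boundary cycle freely reduces to the empty word, and deleting that subdiagram would lower the area; and (iii) the surviving adjacencies are rigid --- two $R_1$-cells meet only along a single $a_2^{\pm1}$- or $b_2^{\pm1}$-edge and must then be equally oriented, while an $R_1$-cell and an $R_2$-cell meet only along a subarc of the $c_1c_2c_3$-segment or along one $a_1^{\pm1}$- or $b_1^{\pm1}$-edge, and must then be oppositely oriented. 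Since cross-adjacency between an $r_1$-cell and an $r_2$-cell (and between an $r_1^{-1}$-cell and an $r_2^{-1}$-cell) is also impossible, the cells reading $\{r_1,r_2^{-1}\}$ and those reading $\{r_1^{-1},r_2\}$ never share an edge; cutting $D$ at cut vertices (shells, cutcells, and spurs of a piece persist in $D$) and, if need be, reorienting, we may assume every $R_1$-cell reads $r_1$ and every $R_2$-cell reads $r_2^{-1}$. If no $R_2$-cell occurs, $D$ is a reduced diagram over $\langle a_1,\dots,c_3 \mid r_1\rangle$, whose presentation complex has free fundamental group; together with (ii) this forces the dual graph of $D$ to be a tree, so an extremal $R_1$-cell has $8$ consecutive boundary edges and is a shell.

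\emph{The core, and the main obstacle.} It remains to show that a disk cannot be patched together from equally oriented $R_1$-cells glued along $a_2$- and $b_2$-edges and decorated by isolated $R_2$-cells attached along their $c$-segments, without creating a spur, shell, or strong cutcell. The natural tool is a combinatorial Gauss--Bonnet / discharging argument with an angle assignment modelled on the Euclidean geometry coming from the quotient $G \to \mathbb{Z}^2$ (kill $F_4$): one wants every interior vertex and every $2$-cell to have nonpositive combinatorial curvature --- for the $2$-cells one exploits the explicit identity that an $R_2$-cell fused with the $R_1$-cell along its $c$-segment has boundary a cyclic conjugate of $[a_1a_2,b_1b_2]^{-1}$ of length $8$ --- and then charges the $\ge 2\pi$ of curvature forced onto $\partial_pD$ to cells, arguing that a cell absorbing too much must be a shell or strong cutcell. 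The key and delicate point is that pure edge-counting will not suffice: the presentation is calibrated so that a $2$-cell with four consecutive boundary edges is just short of being a shell, and what rescues the argument is that at every ``convex corner'' where a boundary $R_1$-cell would otherwise fall short, an adjacent $R_2$-cell interrupts its run along $\partial_pD$ and splits the preimage of its boundary into two nontrivial arcs --- the strong-cutcell phenomenon visible, for instance, at the top-left corner of a commutator grid. Carrying out this curvature bookkeeping and enumerating the finitely many local pictures of $R_2$-cells nested among $R_1$-cells is the substance of the proof, and is where I expect essentially all of the difficulty to lie.
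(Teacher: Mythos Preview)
Your algebraic set-up (the Tietze move identifying $G$, and the check that the two relators embed) is fine and matches the paper. The trouble is that what you call ``the substance of the proof'' is exactly the part you have not done: you outline a curvature/discharging scheme, acknowledge that straight edge-counting falls short, and then stop before carrying out any of the bookkeeping. As it stands this is a plan, not a proof, and the plan itself is fragile --- you have not specified an angle assignment, you have not verified nonpositive curvature at interior vertices, and you have not explained how a boundary $R_1$-cell that picks up only four consecutive outer edges is forced to become a strong cutcell rather than merely a weak one. Your adjacency analysis is largely correct (the signed letters of $r_1$ and $r_2$ are indeed pairwise distinct, which drives (i)--(iii)), but it does not by itself get you close to the conclusion.

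The paper bypasses all of this with a short geometric trick you have missed. Kill the $F_4$ factor to get a homomorphism $\pi\colon G\to\mathbb{Z}^2$; this is induced by collapsing the edges $a_2,b_2,c_2,c_3$, which sends $X$ to the presentation complex $T$ of $\langle a_1,b_1,c_1\mid b_1a_1^{-1}c_1,\ a_1^{-1}b_1c_1\rangle$, whose universal cover is the plane with the unit-square grid cut into upper-right and lower-left triangles. Now given a reduced topological-disk diagram $D\to\widetilde X$ (to which one reduces via the disk-subdiagram lemma), push it down to $\widetilde T=\mathbb{R}^2$ and pick the image point $p$ with minimal $x$ among those with minimal $y$. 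Any $2$-cell $R$ of $D$ covering $p$ has its left and bottom sides on $\partial D$ by minimality of $p$; if $R$ is a pentagon its diagonal side is also free (else a neighbouring cell would contradict reducedness or the choice of $p$), giving a shell; if $R$ is a triangle it abuts a pentagon along its $c_1c_2c_3$-side, and that pentagon is then a strong cutcell. That is the entire argument --- two cases, no curvature.
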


First we will present an example which is more elementary, but where one of the  2-cells of $X$ fails to embed in the universal cover  $\widetilde{X}$.

\begin{thm}  \label{basic_counterexample}
The presentation 2-complex $X$ associated to the presentation  $\langle a, b\,|\,[a,b]c, c \rangle$ of  $G = \Z^2$  enjoys the weakest form of the generalized Dehn property   (definition \eqref{one} of cutcell).   
\end{thm}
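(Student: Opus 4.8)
The plan is to take a minimal‑area disk diagram $D\to X$ with at least two $2$‑cells, assume it has neither a spur nor a shell, and then exhibit a cutcell in the sense of definition~\eqref{one}. I would first record the two $2$‑cells of $X$: the \emph{pentagon}, attached along $aba^{-1}b^{-1}c=[a,b]c$, and the \emph{monogon}, attached along the single loop‑edge labelled $c$. Each pentagon carries exactly one $c$‑edge, and killing the $c$‑edges via the monogon relator turns the pentagon relator into $[a,b]$, confirming $G\cong\Z^2$.

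The one structural input is that reducedness (which minimality supplies) severely constrains $c$‑edges. Two monogons glued along their shared edge form a cancelling pair; so do two pentagons glued along their $c$‑edges, since in a pentagon the edge sequence induced on $\partial_pD$ starting from the $c$‑edge is forced to read $c,a,b,a^{-1},b^{-1}$, so that two such cells fold across the shared edge. Hence no $c$‑edge borders two pentagons. Moreover a monogon with its edge on $\partial D$ has its whole perimeter on $\partial_pD$, so it is a shell; thus, barring a shell, every monogon lies in the interior of $D$ and, by the previous sentence, is glued to a \emph{host pentagon} along the host's $c$‑edge (which is therefore a loop).

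Now the argument proper. Suppose $D$ contains a monogon $M$ with host pentagon $R$. If $M$ and $R$ are the only $2$‑cells of $D$, then the four non‑$c$ edges of $R$ all lie on $\partial_pD$ in a contiguous block, so $R$ is a shell. Otherwise $D$ has a third $2$‑cell $C$, and $R$ is a cutcell: $M$ is joined to the rest of $D$ only along $R$'s $c$‑edge and that edge's endpoint, both of which lie in $\closure(R)$, so the open cell $M$ is a whole component of $D-\closure(R)$, while $C$ lies in a different component. So we may assume $D$ contains no monogon; then every $2$‑cell is a pentagon whose $c$‑edge lies on $\partial D$. Two more easy reductions: a pentagon sharing an edge with at most one other pentagon has at least four of its five edges on $\partial_pD$, necessarily forming a contiguous segment there, so it is a shell (or $D$ is a single $2$‑cell); and a pentagon whose $c$‑edge is a loop is a cutcell, because that loop separates $S^2$ into two disks, the one containing the pentagon also containing more of $D$ (else the pentagon's other four edges lie on $\partial_pD$ and it is a shell) and the other containing part of $D$ beyond the loop (as $D$ is not a single monogon), so deleting the pentagon disconnects $D$. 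We are left with the case that every $2$‑cell of $D$ is a pentagon, each meeting at least two other pentagons along interior $a$‑ or $b$‑edges and carrying a non‑loop $c$‑edge on $\partial D$.

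This remaining case is the main obstacle. The leverage is that each pentagon touches $\partial D$ along its $c$‑edge, which sits between the pentagon's $a$‑edge and its $b^{-1}$‑edge, and that at most one of these two neighbouring edges can also lie on $\partial D$ (otherwise a run of three boundary edges appears and the pentagon is a shell). I would run a combinatorial Gauss--Bonnet argument on $D$, assigning interior angle $3\pi/5$ at each pentagon corner so that the total curvature is $2\pi$; positive curvature is then confined to interior vertices of valence $3$ or $4$ and to boundary vertices meeting at most two pentagon corners, and the goal is to show that the "every cell exposed, no monogons" hypothesis forces any such positive‑curvature vertex to be the site of a spur, a shell, or a cutcell, a contradiction. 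An alternative, more combinatorial finish starts from the fact that the pentagon‑adjacency graph has minimum degree at least two, hence contains a cycle; analysing the subdisk that such a ring of pentagons bounds, and using minimality to ensure it is nonempty, one extracts a pentagon whose deletion disconnects $D$. Pinning this last step down---in particular ruling out the grid‑like configurations that are exactly what makes the bare presentation $\langle a,b\mid[a,b]\rangle$ fail the Dehn property---is where the genuine work lies; everything preceding it is bookkeeping with reducedness and with the definition of a cutcell.
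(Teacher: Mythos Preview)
Your bookkeeping with monogons is correct and essentially matches the paper's conclusion in that case: once a monogon is present and $D$ has at least three cells, its host pentagon is a cutcell in the sense of definition~(1). The genuine gap is exactly where you say it is --- the ``all pentagons, no monogons'' case --- and neither of your proposed finishes (Gauss--Bonnet, adjacency-cycle) is carried out. As you yourself note, this is precisely the grid-like situation that defeats the ordinary Dehn property for $\langle a,b\mid [a,b]\rangle$, so a purely intrinsic curvature count on $D$ is unlikely to succeed without some extra input.

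The missing idea, and the paper's entire strategy, is to work in the universal cover rather than intrinsically in $D$. The complex $\widetilde{X}$ is a plane tessellated by squares (the pentagons) with a $c$-loop and monogon at each vertex. Given any reduced topological-disk diagram $D\to\widetilde{X}$ containing a pentagon, choose a pentagon $R$ whose image square has no pentagon of $D$ mapping to the square immediately above it or immediately to its right; such an $R$ exists by finiteness. Reducedness and the topological-disk hypothesis force the upper and right edges of $R$ onto $\partial D$. Now there is a single dichotomy: if $R$ encloses a monogon in its $c$-loop, $R$ is a cutcell; if not, the $c$-edge joins the upper and right edges to give a contiguous boundary arc of length~$3>5/2$, so $R$ is a shell. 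In particular your ``hard case'' dissolves: when there are no monogons the extremal pentagon is automatically a shell, so the assumption ``no shell and no monogon'' is simply inconsistent, and there is nothing left to prove by Gauss--Bonnet.
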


Both $\Z^2 \ast F_4$ and $\Z^2$ have quadratic Dehn functions, so the complexes $X$ of Theorems~\ref{main_counterexample} and ~\ref{basic_counterexample} do not admit linear isoperimetric functions. 

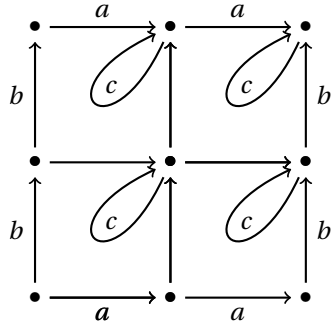
\begin{figure}[h]  \centering
\begin{tikzpicture}[scale=0.6]
  \node (A) at (0,0) {$\bullet$};
	\node (C) at (3,0) {$\bullet$};
	\node (F) at (0,3) {$\bullet$};
	\node (H) at (3,3) {$\bullet$};
	\node (AA) at (3,0) {$\bullet$};
	\node (CC) at (6,0) {$\bullet$};
	\node (FF) at (3,3) {$\bullet$};
	\node (HH) at (6,3) {$\bullet$};
	\node (TA) at (0,3) {$\bullet$};
	\node (TC) at (3,3) {$\bullet$};
	\node (TF) at (0,6) {$\bullet$};
	\node (TH) at (3,6) {$\bullet$};
	\node (TAA) at (3,3) {$\bullet$};
	\node (TCC) at (6,3) {$\bullet$};
	\node (TFF) at (3,6) {$\bullet$};
	\node (THH) at (6,6) {$\bullet$};

  \draw[thick,->] (A) -- (C) node[below,midway] {$a$};
	\draw[thick,->] (F) -- (H);
	\draw[thick,->] (A) -- (F) node[left,midway] {$b$};
	\draw[thick,->] (C) -- (H);
	\draw[thick,->] (AA) -- (CC) node[below,midway] {$a$};
	\draw[thick,->] (FF) -- (HH);
	\draw[thick,->] (AA) -- (FF);
	\draw[thick,->] (CC) -- (HH) node[right,midway] {$b$};
	\draw[thick,->] (A) -- (C) node[below,midway] {$a$};
	\draw[thick,->] (TF) -- (TH) node[above,midway] {$a$};
	\draw[thick,->] (TA) -- (TF) node[left,midway] {$b$};
	\draw[thick,->] (TC) -- (TH);
	\draw[thick,->] (TAA) -- (TCC);
	\draw[thick,->] (TFF) -- (THH) node[above,midway] {$a$};
	\draw[thick,->] (TAA) -- (TFF);
	\draw[thick,->] (TCC) -- (THH) node[right,midway] {$b$};
	\path (THH) edge [thick,out=245,in=205,looseness=30,->] node [above right] {$c$} (THH);
	\path (TH) edge [thick,out=245,in=205,looseness=30,->] node [above right] {$c$} (TH);
	\path (H) edge [thick,out=245,in=205,looseness=30,->] node [above right] {$c$} (H);
	\path (HH) edge [thick,out=245,in=205,looseness=30,->] node [above right] {$c$} (HH);
\end{tikzpicture} 
\caption{A diagram over  $\langle a, b\,|\,[a,b]c, c \rangle$.} \label{easy figure}
\end{figure}

The example of Theorem~\ref{basic_counterexample}  illuminates the difference between the three definitions  of cutcell.  In the diagram depicted in Figure~\ref{easy figure} each of the four pentagonal cells  are cutcells in the sense of (1) since removing
the closure of any one disconnects a monogon.  But they are not cutcells in the sense of (2) or (3).   For an example with cutcells in the sense of (1) and (2), but not (3), see the remark at the end of this note.   The cutcells discussed in our proof of Theorem~\ref{main_counterexample} are cutcells in all three senses.  

In general, disk-diagrams need not be topological disks.  They can have 1-dimensional portions or cut-vertices.  The following lemma tells us that we may  focus on disk diagrams that are topological disks.  

\begin{lemma} \label{disk lemma}
For the generalized Dehn property (with any of the three definitions of cutcell), it suffices to restrict attention to   minimal disk diagrams $D \to X$ where $D$ is a topological disk. 
\end{lemma}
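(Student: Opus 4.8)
The plan is to exploit the ``cactus'' structure of disk diagrams --- a singular disk diagram is a tree of topological disks and arcs wedged together at vertices --- to reduce to the case the hypothesis covers, and then to transfer back. It suffices to prove: if every minimal disk diagram over $X$ that is a topological disk is a single $2$-cell or contains a spur, shell, or cutcell, then the same holds for \emph{every} minimal disk diagram over $X$. So I would assume the former and take an arbitrary minimal $D\to X$. If $D$ is a single $0$-, $1$-, or $2$-cell, or a topological disk, or contains a spur, there is nothing to prove. Otherwise $D$ is $2$-dimensional (a $1$-dimensional diagram with no spur is a single vertex), has no spur, and is not a topological disk, so it has a cut vertex.

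In that case I would pass to the block decomposition and pick a \emph{leaf} block $B$: one attached to $E=\closure(D\setminus B)$ at a single vertex $v$, with $B\cap E=\{v\}$. Since $D$ has no spur, $B$ cannot be a single arc, so $B$ is a topological disk carrying at least one $2$-cell. A one-line replacement argument shows $B$ is itself minimal: a strictly smaller diagram $B'$ with the same boundary cycle would make $B'\cup_v E$ a strictly smaller diagram for $D$ with the same boundary path. Applying the hypothesis to $B$: either $B$ is a single $2$-cell (treated below), or $B$ contains a spur --- impossible, since a topological disk has no valence-$1$ boundary vertex --- or $B$ contains a shell or a cutcell.

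It remains to push this feature out to $D$. The two facts that do the work are that every boundary edge of $B$ is a boundary edge of $D$ (nothing of $E$ lies on the far side of an edge of $\partial B$), and that the boundary cycle $\partial_p B$ --- an embedded cycle, as $B$ is a topological disk --- occurs as a single contiguous sub-arc of $\partial_p D$. From these: if $B$ is a single $2$-cell $R$, then all of $\partial R$ is a contiguous segment of $\partial_p D$, so $R$ is a shell of $D$; if $R$ is a shell of $B$, the contiguous segment of $\partial_p B$ witnessing it is a contiguous segment of $\partial_p D$, so $R$ is a shell of $D$; if $R$ is a cutcell of $B$ in sense (1), then $B\setminus\closure(R)$ has at least two components and re-attaching $E$ at $v$ --- which lies in at most one of them, or is deleted along with $\closure(R)$ --- leaves at least two, so $R$ is a cutcell of $D$; and for senses (2) and (3), the preimage of $\partial R$ in $\partial_p D$ is obtained from its preimage in $\partial_p B$ by cutting the cycle open at $v$, which cannot merge components because the edges of $\partial_p D$ on the $E$-side at $v$ do not map into $\partial R$ --- so at least two components, still nontrivial in case (3), survive.

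The main obstacle I anticipate is precisely this last piece of bookkeeping when $v$ happens to lie on $\partial R$: cutting $\partial_p B$ open at $v$ may split one component of the preimage of $\partial R$ into two, and for the strong definition (3) one must check both halves remain nontrivial paths --- which holds, because $v$ is then either an endpoint of that component (no splitting occurs) or an interior vertex of it (an edge survives on each side). A minor point is to pin down the single-$2$-cell leaf-block case together with the convention that an entire perimeter counts as ``more than half $\ldots$ a contiguous segment.'' The underlying cactus decomposition, and the fact that a spur-free leaf block is a topological disk, I would cite from the standard structure theory of disk diagrams rather than reprove.
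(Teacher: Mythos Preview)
Your argument is essentially correct and considerably more detailed than the paper's, but there is one genuine oversight in the shell-transfer step. You claim that if $R$ is a shell of the leaf block $B$, the contiguous outer arc of $R$ in $\partial_p B$ remains a contiguous segment of $\partial_p D$. This fails precisely when the cut vertex $v$ lies in the \emph{interior} of that outer arc: linearising the cycle $\partial_p B$ at $v$ to splice it into $\partial_p D$ then splits the arc in two, with the $E$-portion of $\partial_p D$ inserted between the halves. You anticipate exactly this bookkeeping for cutcells in your final paragraph, but not for shells. The fix is immediate and uses the same mechanism: the two halves of the outer arc give at least two nontrivial components of the preimage of $\partial R$ in $\partial_p D$, so $R$ is a cutcell of $D$ in every sense.

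That fix is in fact the whole of the paper's argument. The paper's proof is three sentences: if $D$ is $1$-dimensional it has spurs; otherwise take any maximal topological-disk subdiagram $D_0$; a cutcell of $D_0$ is a cutcell of $D$, and a shell of $D_0$ that fails to remain a shell of $D$ is automatically a cutcell of $D$. There is no leaf-block selection, no separate treatment of the single-$2$-cell case, and no component-counting at $v$, because the single observation ``a shell whose outer arc is interrupted on $\partial_p D$ is a cutcell'' absorbs all of that. Your longer route makes the cactus structure and the transfer mechanism explicit, which is instructive; the paper's shortcut shows that once you have that one observation, none of the extra scaffolding is needed.
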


\begin{proof} If $D$ is 1-dimensional and is not a single vertex, then it possesses spurs.   
Otherwise, consider a maximal subdiagram $D_0$ of $D$ which is a topological disk. A cutcell in $D_0$ will be a cutcell in $D$.  If a shell in $D_0$ is not a shell in $D$, then it is a cutcell in $D$.
\end{proof}

\begin{proof}[Proof of Theorem~\ref{basic_counterexample}]  The universal cover $\widetilde{X}$ is a plane tessellated in the manner of Figure~\ref{easy figure}.    Suppose     $D\to\widetilde{X}$ is the lift of a reduced topological-disk diagram $D \to X$  such that $D$  is not   a single 2-cell.  By Lemma~\ref{disk lemma} it is enough to show that $D$ contains  a cutcell in the sense of definition (1) or contains a shell.  Now $D$ cannot consist  only of monogons, for it   would then have to be a single monogon.  So $D$ includes a pentagonal 2-cell.   One such pentagonal 2-cell $R$ must map to a square in  $\widetilde{X}$ with the property that the square above it and the square to its right do not contain the image of pentagons from $D$.  It follows (since $D$ is reduced and a topological disk) that its upper and right edges are in $\partial D$.   If $R$ encloses a monogon, then $R$ is a cutcell.
If $R$ does not enclose a monogon, a portion of its boundary labelled $a c^{-1} b^{-1}$ is part of the boundary circuit of $D$ and so $R$ is a shell.        
\end{proof}

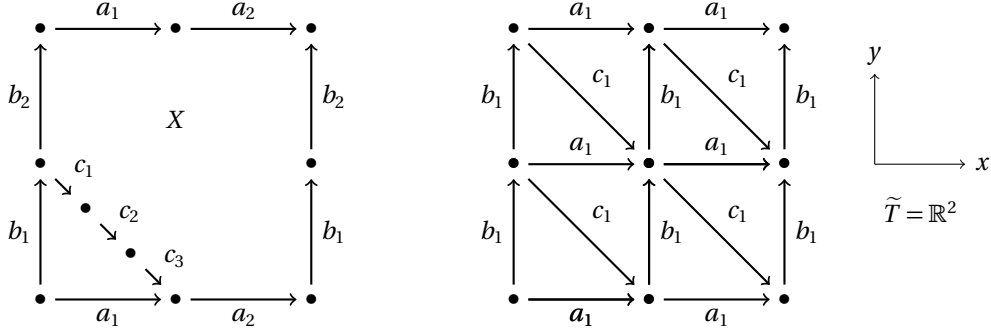
\begin{figure}[h] \centering
\begin{tikzpicture}[scale=0.6]
  \node (A) at (0,0) {$\bullet$};
	\node (B) at (3,0) {$\bullet$};
	\node (C) at (6,0) {$\bullet$};
	\node (D) at (0,3) {$\bullet$};
	\node (E) at (6,3) {$\bullet$};
	\node (F) at (0,6) {$\bullet$};
	\node (G) at (3,6) {$\bullet$};
	\node (H) at (6,6) {$\bullet$};
	\node (I) at (1,2) {$\bullet$};
	\node (J) at (2,1) {$\bullet$};
	\node (K) at (3,4) {$X$};
  \draw[thick,->] (A) -- (B) node[below,midway] {$a_1$};
	\draw[thick,->] (F) -- (G) node[above,midway] {$a_1$};
	\draw[thick,->] (B) -- (C) node[below,midway] {$a_2$};
	\draw[thick,->] (G) -- (H) node[above,midway] {$a_2$};
	\draw[thick,->] (A) -- (D) node[left,midway] {$b_1$};
	\draw[thick,->] (C) -- (E) node[right,midway] {$b_1$};
  \draw[thick,->] (D) -- (F) node[left,midway] {$b_2$};
	\draw[thick,->] (E) -- (H) node[right,midway] {$b_2$};
	\draw[thick,->] (D) -- (I) node[above right,midway] {$c_1$};
	\draw[thick,->] (I) -- (J) node[above right,midway] {$c_2$};
	\draw[thick,->] (J) -- (B) node[above right,midway] {$c_3$};
\end{tikzpicture}
\hspace{40pt}
\begin{tikzpicture}[scale=0.6]
  \node (A) at (0,0) {$\bullet$};
	\node (C) at (3,0) {$\bullet$};
	\node (F) at (0,3) {$\bullet$};
	\node (H) at (3,3) {$\bullet$};
	\node (AA) at (3,0) {$\bullet$};
	\node (CC) at (6,0) {$\bullet$};
	\node (FF) at (3,3) {$\bullet$};
	\node (HH) at (6,3) {$\bullet$};
	\node (TA) at (0,3) {$\bullet$};
	\node (TC) at (3,3) {$\bullet$};
	\node (TF) at (0,6) {$\bullet$};
	\node (TH) at (3,6) {$\bullet$};
	\node (TAA) at (3,3) {$\bullet$};
	\node (TCC) at (6,3) {$\bullet$};
	\node (TFF) at (3,6) {$\bullet$};
	\node (THH) at (6,6) {$\bullet$};
	\node (L) at (9,2) {$\widetilde{T}=\mathbb{R}^2$};

  \draw[thick,->] (A) -- (C) node[below,midway] {$a_1$};
	\draw[thick,->] (F) -- (H) node[above,midway] {$a_1$};
	\draw[thick,->] (A) -- (F) node[left,midway] {$b_1$};
	\draw[thick,->] (C) -- (H) node[right,midway] {$b_1$};
	\draw[thick,->] (F) -- (C) node[above right,midway] {$c_1$};
	\draw[thick,->] (AA) -- (CC) node[below,midway] {$a_1$};
	\draw[thick,->] (FF) -- (HH);
	\draw[thick,->] (AA) -- (FF);
	\draw[thick,->] (CC) -- (HH) node[right,midway] {$b_1$};
	\draw[thick,->] (FF) -- (CC) node[above right,midway] {$c_1$};
	\draw[thick,->] (A) -- (C) node[below,midway] {$a_1$};
	\draw[thick,->] (TF) -- (TH) node[above,midway] {$a_1$};
	\draw[thick,->] (TA) -- (TF) node[left,midway] {$b_1$};
	\draw[thick,->] (TC) -- (TH) node[right,midway] {$b_1$};
	\draw[thick,->] (TF) -- (TC) node[above right,midway] {$c_1$};
	\draw[thick,->] (TAA) -- (TCC) node[above,midway] {$a_1$};
	\draw[thick,->] (TFF) -- (THH) node[above,midway] {$a_1$};
	\draw[thick,->] (TAA) -- (TFF);
	\draw[thick,->] (TCC) -- (THH) node[right,midway] {$b_1$};
	\draw[thick,->] (TFF) -- (TCC) node[above right,midway] {$c_1$};
	
	\draw[->] (8,3) -- (8,5) node[above] {$y$};
	\draw[->] (8,3) -- (10,3) node[right] {$x$};
\end{tikzpicture}
\caption{The presentation $2$-complex $X$  of Theorem~\ref{main_counterexample} (left) and a portion of a tessellation of the plane which is a quotient of the universal cover $\widetilde{X}$ (right).} \label{harder figure}
\end{figure}  

\begin{proof}[Proof of Theorem~\ref{main_counterexample}]
The presentation 2-complex $X$ is depicted leftmost in Figure~\ref{harder figure}. It has two 2-cells, a ``triangle'' of perimeter 5 and a ``pentagon'' of perimeter 9.
If we let $a=a_1a_2$ and $b=b_1b_2$ and eliminate $a_1=aa_2^{-1}$,
$b_1=bb_2^{-1}$, and $c_1=b_1^{-1}a_1c_3^{-1}c_2^{-1}$, we obtain
$$G \ \cong \ \langle a,b,a_2,b_2,c_2,c_3\,|\,[a,b]\rangle \ \cong \ \mathbb{Z}^2\ast F_4.$$

That both 2-cells in $X$ embed in its universal cover  $\widetilde{X}$ is then evident from the observation that every subword one reads along a proper portion of its boundary circuit represents a non-trivial element in $\mathbb{Z}^2\ast F_4$.

Let $\pi:G\to\mathbb{Z}^2$ be the projection   killing the factor $F_4$ and sending $a$ and $b$ to $(1,0)$ and $(0,1)$, respectively.
Thus $\pi(a_1)=(1,0)$, $\pi(b_1)=(0,1)$, $\pi(c_1)=(1,-1)$ and $\pi(a_2)=\pi(b_2)=\pi(c_2)=\pi(c_3)=(0,0)$.
This map $\pi$ is induced by    the combinatorial map $\rho$ from $X$ to the  presentation complex $T$ (a torus) of $\langle a_1,b_1,c_1\,|b_1a_1^{-1}c_1,a_1^{-1}b_1c_1\rangle$ obtained by collapsing $a_2,b_2,c_2,c_3$.
The cell complex structure on $\widetilde{T}=\mathbb{R}^2$ is that of the unit square grid with each square divided into two triangles, as shown on the right in Figure~\ref{harder figure}. The lift $\widetilde{\rho}:\widetilde{X}\to\widetilde{T}$ sends ``pentagons'' to the upper-right triangles in $\widetilde{T}$
and adjacent (via the path labelled $c_1c_2c_3$) ``triangles'' to lower-left triangles of the same square.

Now consider a reduced  topological-disk diagram $D\to\widetilde{X}$ which is not   a single 2-cell.  By Lemma~\ref{disk lemma},  to establish the  generalized Dehn property, it suffices to show that $D$ contains a shell or cutcell. 

Let $p=(x,y)\in\widetilde{T}$ be the point of the image under $\widetilde{\rho}$ with minimal $x$ among all points with minimal $y$.
Let $R$ be a 2-cell of the disk diagram whose image in $\widetilde{T}$ contains $p$.
We consider cases.
If $R$ is a pentagon then its left, bottom and diagonal faces (i.e.\ edges) are free.
     Otherwise the image in $\widetilde{T}$ of an adjacent 2-cell would contradict our choice of $p$ or the fact that $D$ is minimal area and so reduced.
     Therefore $R$ is a shell in this case.
If, instead, $R$ is a triangle then, similarly, its left and bottom faces are free.
     So (as $D$ is reduced) it abuts a pentagon along its length-3 diagonal boundary segment.
     But then this pentagon is a (strong) cutcell.
So $X$ satisfies the (strong) generalized Dehn property.
\end{proof}

We conclude with three remarks.   
 
 \begin{Rem} The presentation
\begin{equation}
\langle \, a_1,  b_1,  c_1, c_2, c_3 \ | \   b_1 a_1^{-1} c_1c_2c_3,\ \,a_1^{-1}b_1c_1c_2c_3 \, \rangle \ \cong \  \Z^2 \ast F_2,
\label{third presentation}
\end{equation} 
obtained by killing $a_2$ and  $b_2$ in $G$ of Theorem~\ref{main_counterexample},  produces an example which, by adapting the above argument, can be proved to have the following properties.  The 2-cells of its presentation 2-complex embed in its universal cover.  It  \emph{almost}\footnote{We thank the referee for  pointing out that the  disk diagram
for $[a_1,b_1]$ has area 2 but contains no spurs, shells, or cutcells.
The argument of Theorem 1 shows this counterexample is unique.} satisfies  the generalized Dehn property in the sense of definition (1) or, equivalently, (2) of cutcell.   It admits arbitrarily large minimal area diagrams which fail  the generalized Dehn property with cutcells   as per definition (3).

 For example, in the diagram for $[a_1^2,b_1^2]$ of Figure~\ref{third example} the triangle labelled $R$ is a cutcell in the sense of definition (1) or (2), but this diagram contains no spurs or shells and it has no   cutcells in the sense of (3).  
\end{Rem}

\begin{figure}[h] \centering
\begin{tikzpicture}[scale=0.6]
  \node (A) at (0,0) {$\bullet$};
	\node (C) at (3,0) {$\bullet$};
	\node (F) at (0,3) {$\bullet$};
	\node (H) at (3,3) {$\bullet$};
	\node (AA) at (3,0) {$\bullet$};
	\node (CC) at (6,0) {$\bullet$};
	\node (FF) at (3,3) {$\bullet$};
	\node (HH) at (6,3) {$\bullet$};
	\node (TA) at (0,3) {$\bullet$};
	\node (TC) at (3,3) {$\bullet$};
	\node (TF) at (0,6) {$\bullet$};
	\node (TH) at (3,6) {$\bullet$};
	\node (TAA) at (3,3) {$\bullet$};
	\node (TCC) at (6,3) {$\bullet$};
	\node (TFF) at (3,6) {$\bullet$};
	\node (THH) at (6,6) {$\bullet$};

  \draw[thick,->] (A) -- (C) node[below,midway] {$a_1$};
	\draw[thick,->] (F) -- (H) node[above,midway] {$a_1$};
	\draw[thick,->] (A) -- (F) node[left,midway] {$b_1$};
	\draw[thick,->] (C) -- (H) node[right,midway] {$b_1$};
	\draw[thick,->] (AA) -- (CC) node[below,midway] {$a_1$};
	\draw[thick,->] (FF) -- (HH);
	\draw[thick,->] (AA) -- (FF);
	\draw[thick,->] (CC) -- (HH) node[right,midway] {$b_1$};

	\draw[thick,->] (A) -- (C) node[below,midway] {$a_1$};
	\draw[thick,->] (TF) -- (TH) node[above,midway] {$a_1$};
	\draw[thick,->] (TA) -- (TF) node[left,midway] {$b_1$};
	\draw[thick,->] (TC) -- (TH) node[right,midway] {$b_1$};
	
	\draw[thick,->] (TAA) -- (TCC) node[above,midway] {$a_1$};
	\draw[thick,->] (TFF) -- (THH) node[above,midway] {$a_1$};
	\draw[thick,->] (TAA) -- (TFF);
	\draw[thick,->] (TCC) -- (THH) node[right,midway] {$b_1$};

  \node (12) at (1,2) {$\bullet$};
	\node (21) at (2,1) {$\bullet$};
	\node (42) at (4,2) {$\bullet$};
	\node (51) at (5,1) {$\bullet$};
	\node (15) at (1,5) {$\bullet$};
	\node (24) at (2,4) {$\bullet$};
	\node (45) at (4,5) {$\bullet$};
	\node (54) at (5,4) {$\bullet$};
	\draw[thick,->] (TA) -- (12) node[above right,midway] {}; 
	\draw[thick,->] (12) -- (21) node[above right,midway] {}; 
	\draw[thick,->] (21) -- (AA) node[above right,midway] {}; 
	\draw[thick,->] (TC) -- (42) node[above right,midway] {}; 
	\draw[thick,->] (42) -- (51) node[above right,midway] {}; 
	\draw[thick,->] (51) -- (CC) node[above right,midway] {}; 
	\draw[thick,->] (TF) -- (15) node[above right,midway] {}; 
	\draw[thick,->] (15) -- (24) node[above right,midway] {}; 
	\draw[thick,->] (24) -- (TAA) node[above right,midway] {}; 
	\draw[thick,->] (TH) -- (45) node[above right,midway] {}; 
	\draw[thick,->] (45) -- (54) node[above right,midway] {}; 
	\draw[thick,->] (54) -- (TCC) node[above right,midway] {}; 
	\node (K) at (2.1,2.1) {$R$};

\end{tikzpicture} \caption{A diagram over presentation \eqref{third presentation}.} \label{third example}
\end{figure}
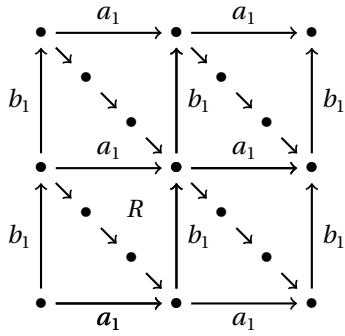

\begin{Rem}
Wise \cite{wisep} proposes imposing an additional hypothesis on $X$.
Say there are \emph{no big pieces} in $X$ if any common boundary arc between two 2-cells is always strictly shorter than half the shorter perimeter of the two 2-cells.  

This \emph{no big pieces} condition in conjunction with the  generalized Dehn property does not imply  a linear isoperimetric inequality.  Our argument  for Theorem \ref{main_counterexample} establishes an example in the form of the presentation 2-complex (shown in Figure~\ref{figure four}) of
\begin{equation}
\langle \, a_1, a_2, b_1, b_2, c_1, c_2, c_3, d_1, d_2 \ | \  a_2b_1b_2a_2^{-1}a_1^{-1}b_2^{-1}c_1c_2c_3,\ b_1c_1d_1^{-1},\ d_1c_2d_2^{-1}, \,d_2c_3a_1^{-1} \, \rangle \ \cong \ \mathbb{Z}^2\ast F_4. \label{pres4}
\end{equation}
\end{Rem}

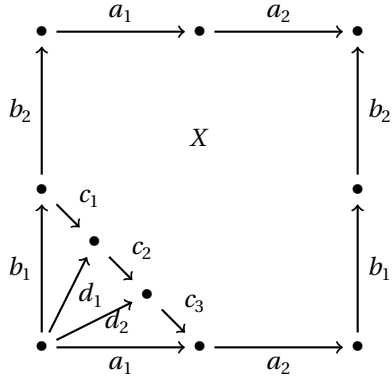
\begin{figure}[h] \centering
\begin{tikzpicture}[scale=0.7]
  \node (A) at (0,0) {$\bullet$};
	\node (B) at (3,0) {$\bullet$};
	\node (C) at (6,0) {$\bullet$};
	\node (D) at (0,3) {$\bullet$};
	\node (E) at (6,3) {$\bullet$};
	\node (F) at (0,6) {$\bullet$};
	\node (G) at (3,6) {$\bullet$};
	\node (H) at (6,6) {$\bullet$};
	\node (I) at (1,2) {$\bullet$};
	\node (J) at (2,1) {$\bullet$};
	\node (K) at (3,4) {$X$};
  \draw[thick,->] (A) -- (B) node[below,midway] {$a_1$};
	\draw[thick,->] (F) -- (G) node[above,midway] {$a_1$};
	\draw[thick,->] (B) -- (C) node[below,midway] {$a_2$};
	\draw[thick,->] (G) -- (H) node[above,midway] {$a_2$};
	\draw[thick,->] (A) -- (D) node[left,midway] {$b_1$};
	\draw[thick,->] (C) -- (E) node[right,midway] {$b_1$};
  \draw[thick,->] (D) -- (F) node[left,midway] {$b_2$};
	\draw[thick,->] (E) -- (H) node[right,midway] {$b_2$};
	\draw[thick,->] (D) -- (I) node[above right,midway] {$c_1$};
	\draw[thick,->] (I) -- (J) node[above right,midway] {$c_2$};
	\draw[thick,->] (J) -- (B) node[above right,midway] {$c_3$};
	\draw[thick,->] (A) -- (I) node[right,midway] {$d_1$};
	\draw[thick,->] (A) -- (J) node[right,midway] {$d_2$};
\end{tikzpicture} \caption{The presentation 2-complex of \eqref{pres4}.} \label{figure four} 
\end{figure}

\begin{Rem} 
The question remains whether  some reasonable notion of \emph{cutcell}   can be added to  the definition of the Dehn property without losing the linear isopermetric inequality.  We have no local condition to suggest.  The best we can see stems from:   
 \begin{proposition}
Suppose a finite connected 2-complex $X$ satisfies the property that every  minimal disk diagram $D\to X$  that is non-trivial (i.e.\ $D$ is not a single vertex)    has a vertex, edge, or face  $R$ for which the closure of  $D-R$ is a union of disk diagrams each with boundary circuit strictly shorter than the boundary circuit  of $D$.
Then $\widetilde{X}$ enjoys a linear isoperimetric inequality.
\end{proposition}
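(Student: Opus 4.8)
The plan is to prove directly that $\widetilde X$ satisfies a linear isoperimetric inequality by bounding the area of minimal disk diagrams over $X$. Since every edge-loop in $\widetilde X$ of length $n$ projects to an edge-loop of length $n$ in $X$, and any disk diagram filling the latter lifts to one filling the former, it suffices to exhibit a constant $K$ with $\Area(D)\le K\,|\partial_p D|$ for every minimal disk diagram $D\to X$. I would argue by induction on $n:=|\partial_p D|$. A single base case handles all $n\le B$ at once, for a constant $B$ fixed later: there are only finitely many edge-loops in $X$ of length at most $B$, so the null-homotopic ones have minimal fillings of area at most some $A_B$, and it is enough to take $K\ge A_B$. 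For the inductive step, let $D$ be minimal and non-trivial and apply the hypothesis to obtain a vertex, edge, or face $R$ with $\closure(D-R)=D_1\sqcup\cdots\sqcup D_k$, each $D_i$ a disk diagram with $|\partial_p D_i|<n$. Each $D_i$ is a subdiagram of a minimal diagram, hence itself minimal, so the inductive hypothesis applies to it.

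When $R$ is a vertex or an edge the induction closes immediately, for any $K$: deleting a cut-vertex merely repartitions the boundary circuit of $D$ among the $D_i$, and deleting a spur-edge shortens it by $2$, so $\sum_i|\partial_p D_i|\le n$, while no $2$-cell is destroyed, giving $\Area(D)=\sum_i\Area(D_i)\le K\sum_i|\partial_p D_i|\le Kn$. The remaining case, $R$ a $2$-cell, is the heart of the matter and the main obstacle. Now $\Area(D)=1+\sum_i\Area(D_i)$, but deleting $R$ promotes the edges of $\partial R$ interior to $D$ into boundary edges of the pieces; writing $\kappa:=\max\{\,|\partial S|:S\text{ a }2\text{-cell of }X\,\}$, which is finite since $X$ is finite, one only gets $\sum_i|\partial_p D_i|\le n+\kappa$, and this defect can genuinely be positive — precisely when $R$ meets $\partial_p D$ along strictly less than half of its boundary. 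Feeding this into the induction naively yields $\Area(D)\le 1+K(n+\kappa)$, which does not close, and iterating the crude estimate produces only a quadratic bound. The substance of the proof is therefore that this defect cannot be incurred too often.

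To organise this I would pass to the full recursion tree $\mathcal T$ obtained by applying the hypothesis repeatedly: the root is $D$, an internal node records the current diagram together with its chosen feature $R$, the children of a node are the pieces it produces, and the leaves are single vertices (of area zero). Then $\Area(D)$ equals the number of nodes whose feature is a $2$-cell, and I would split these into \emph{shortening} nodes, where $R$ meets the boundary along more than half of $\partial R$ (so removing $R$ strictly decreases the total boundary length of the pieces, exactly as for a shell), and \emph{splitting} nodes, where it does not — there the hypothesis forces $k\ge 2$, since a single piece $\closure(D-R)$ could not then have strictly shorter boundary. With $\delta(\nu):=|\partial_p D_\nu|-\sum_{\text{children }c}|\partial_p D_c|$, the telescoping identity $\sum_\nu\delta(\nu)=n$, together with $\delta\ge 1$ at shortening nodes, $\delta\ge-\kappa$ at splitting nodes, and $\delta\ge 0$ elsewhere, bounds the number of shortening nodes by $n$ plus $\kappa$ times the number of splitting nodes; and since splitting nodes and cut-vertex nodes each have at least two children, their total number is at most (number of leaves) $-\,1$. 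The delicate point — and the step I expect to be the real work — is to bound the number of splitting nodes (equivalently, to control the proliferation of small pieces produced by the recursion) linearly in $n$; here one has to exploit that $|\partial_p D_\nu|$ strictly decreases along every edge of $\mathcal T$, so $\mathcal T$ has depth at most $n$, in combination with the bounded geometry of $X$, in order to close the estimate $\Area(D)\le Kn$.
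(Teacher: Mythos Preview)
Your setup is correct as far as it goes: writing $c$ for the maximal perimeter of a $2$-cell of $X$, one indeed gets $\Area(D)\le 1+\sum_i\Area(D_i)$ with each $|\partial_pD_i|<n$ and $\sum_i|\partial_pD_i|\le n+c$, and your telescoping inequality $\#\{\text{shortening}\}\le n+c\cdot\#\{\text{splitting}\}$ is valid. But the proposal stops precisely where the content of the proof lies. You concede that bounding the number of splitting nodes is ``the real work'' and offer only that the tree has depth at most $n$ together with ``bounded geometry of $X$''. Neither suffices: depth at most $n$ alone permits exponentially many leaves, so your bound $\#\{\text{splitting}\}\le\#\{\text{leaves}\}-1$ is empty without a leaf count. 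Worse, the number of splitting nodes in the worst tree with root of boundary length $m$ obeys the \emph{same} recursion $S(m)\le 1+\max\sum_iS(n_i)$ (over $n_i<m$, $\sum_in_i\le m+c$) that governs area, so nothing has been reduced.

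The paper supplies exactly the missing idea, and does so without the tree picture. It sets $f(0)=0$ and $f(n)=1+\max\sum_if(n_i)$ over all finite sequences with $n_i<n$ and $\sum_in_i\le n+c$, so that the Dehn function satisfies $\varphi\le f$, and then proves $f$ is linear via a discrete convexity argument: by induction the increments $f(n)-f(n-1)$ are nondecreasing, which forces the maximum defining $f(n)$ to be attained with one summand equal to $n-1$. Hence $f(n)=1+f(n-1)+K$ for all $n\ge c+2$, where $K=\max\bigl\{\sum_jf(m_j):\sum_jm_j=c+1\bigr\}$ depends only on $c$. In your tree language, convexity says the adversary does best by always peeling off a child of boundary length $n-1$; the resulting main chain has length at most $n$, and each side subtree, having root of boundary length at most $c+1$, contributes only boundedly many $2$-cell nodes. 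That convexity step is what your outline lacks.
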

 
 In the following, let $\Area(w)$ denote the minimal area among disk
diagrams with boundary circuit $w$.
The \emph{Dehn function} $\varphi(n)$ is the maximal $\Area(w)$ among
all circuits $w$ of length at most $n$.

\begin{proof} 
Let $c$ be the length of the longest boundary circuit of a 2-cell in $X$.  
Suppose $w$ is a circuit  of length  $n  > 0$.  Let $D$ be a minimal area disk diagram with boundary $w$.
We have  $\Area(w) \leq 1+ \sum_i \varphi(n_i)$ where $\sum_i{n_i} \leq n+c$ and each $n_i<n$. 
 
 Define $f:\mathbb{N}\to\mathbb{N}$ recursively by $f(0)=0$, $f(n)=1+\max\sum_{i\geq1}{f(n_i)}$ where the maximum is taken over all finite sequences of non-negative integers
$\{n_i\}$ satisfying $\sum_i{n_i}\leq n+c$ and $n_i<n$.
Then $\varphi(n)\leq f(n)$ by the previous paragraph, so it suffices to demonstrate a linear upper bound on $f$.
We show by induction on $n$ that $f(n-1)$ appears as a summand in a maximum sum defining $f(n)$ and that $f(x)-f(x-1)$ is nondecreasing on $[1,n]$.
The base case $n=1$ is trivial: $f(1)=1=1+f(0)$.
For the induction step, note that  if $n-1$ does not occur among the $n_i$, then the nondecreasing condition implies we can increment some $n_i$ by 1 at the cost of decrementing another, until $n-1$ occurs; hence the first conjunct of the induction hypothesis follows from the second.
Now consider a sequence $\{\widehat{n_i}\}$ such that $f(n-1)=1+\sum_{i\geq1}{f(\widehat{n_i})}$.
By the induction hypothesis, we may assume $\widehat{n_1}=n-2$.
Setting $n_1=n-1$ and $n_i=\widehat{n_i}$ for $i\neq1$, we find $$f(n) \ \geq \ 1+\sum_{i\geq1}{f(n_i)} \ = \ 1+f(n-1)+\sum_{i\geq2}^n{f(\widehat{n_i})}
 \ = \  f(n-1)-f(n-2)+1+\sum_{i\geq 1}{f(\widehat{n_i})} \ = \ 2f(n-1)-f(n-2)$$ whence $f(n)-f(n-1)\geq f(n-1)-f(n-2)$.
Therefore $f(x)-f(x-1)$ is nondecreasing on $[1,n]$, completing the induction. 
 
Now that we know there is a sum defining $f(n)=1+ \sum_{i\geq1}{f(n_i)}$ with $n_1 = n-1$, we deduce that there is such a sum in which
the remaining terms satisfy $\sum_{i\geq2}{n_i} = c+1$.
Let $K$ be the maximum of $\sum_{j}{f(m_j)}$ over all $\{m_j\}$ with $\sum_{j}{m_j} = c+1$.
Then   $f(n)=1+ f(n-1) + K$ for all $n \geq c+2$.    
(The bound $n\geq c+2$ means the condition $n_i<n$ automatically holds given that $\sum_{i=2}^c{n_i}=c+1$.)
Taking $n=c+2$ we find $K=f(c+2)-f(c+1)-1$. 
Therefore $f(n)=f(n-1)+f(c+2)-f(c+1)$ when $n\geq c+2$. 
The desired linear upper bound on $f$ follows.  \end{proof}
 
If $R$ is a spur or shell, then it will satisfy the condition of the proposition.  So any notion of cutcell which also satisfies the condition of the proposition will give a generalized Dehn property which implies a linear isoperimetric inequality.
\end{Rem}

\begin{Acknowledgements}  We thank Jonah Gaster and Dani Wise for sharing their conjecture with us and for engaging discussions.   We also thank an anonymous referee for a careful reading.
\end{Acknowledgements}

\end{document}